\numberwithin{equation}{section}
\theoremstyle{plain}
\newtheorem{teo}{Theorem}[section]
\newtheorem*{teo*}{Theorem}
\newtheorem{teoA}{Theorem}
\newcommand{\R}{\ensuremath{{\mathbb{R}}}}
\newcommand{\g}{\ensuremath{\mathtt{g}}}
\newcommand{\hg}{\ensuremath{\widehat{\mathtt{g}}}}
\newcommand{\tg}{\ensuremath{\widetilde{\mathtt{g}}}}
\renewcommand\vec[1]{\boldsymbol{#1}}
\begin{document}

\title[Conformal flatness of compact three-dimensional Cotton-parallel manifolds]{Conformal flatness of compact \\ three-dimensional Cotton-parallel manifolds}

\author[Ivo Terek]{Ivo Terek}
\address[Ivo Terek]{Department of Mathematics, The Ohio State University, Columbus, OH 43210, USA}
\email{terekcouto.1@osu.edu}

\keywords{Parallel Cotton tensor $\cdot$ Conformal flatness $\cdot$ Lorentz manifolds}
\subjclass[2010]{53C50}

\begin{abstract}
  A three-dimensional pseudo-Riemannian manifold is called essentially conformally symmetric (ECS) if its Cotton tensor is parallel but nowhere-vanishing. In this note we prove that three-dimensional ECS manifolds must be noncompact or, equivalently, that every compact three-di\-men\-si\-o\-nal Cotton-parallel pseudo-Riemannian manifold must be conformally flat.
\end{abstract}
\maketitle

\section{Introduction and main result}

Pseudo-Riemannian manifolds of dimensions $n\geq 4$ whose Weyl tensor is parallel are called \emph{conformally symmetric} \cite{Chaki-Gupta}. Those which are not locally symmetric or conformally flat are called \emph{essentially conformally symmetric} (\emph{ECS}, in short).

It has been shown by Roter in \cite[Corollary 3]{Roter74} that ECS manifolds do exist in all dimensions $n\geq 4$, and in \cite[Theorem 2]{TensorNS77} that they necessarily have indefinite metric signature. The local isometry types of ECS manifolds were described by Derdzinski and Roter in \cite{local_structure}. Compact ECS manifolds exist in all dimensions $n\geq 5$ and realize all indefinite metric signatures -- see \cite{DT_1} and \cite{AGAG10}. It is not currently known if compact four-dimensional ECS manifolds exist.

When the dimension of $M$ is $n \leq 3$, the Weyl tensor vanishes and this discussion becomes meaningless. In dimension $n=3$, however, conformal flatness is encoded in the Cotton tensor as opposed to the Weyl tensor, and so the following natural definition has been proposed in \cite{ECS3D}: a three-dimensional pseudo-Riemannian manifold is called \emph{conformally symmetric} if its Cotton tensor is parallel, and those which are not conformally flat are then called \emph{ECS} (note that every three-dimensional locally symmetric manifold is conformally flat). There, it is also shown \cite[Theorem 1]{ECS3D} that, reversing the metric if needed, any point in a three-dimensional ECS manifold has a neighborhood isometric to an open subset of
\begin{equation}\label{eqn:model}
(\widehat{M}, \hg) = \big(\R^3, (x^3+\mathfrak{a}(t)x)\,{\rm d}t^2 + {\rm d}t\,{\rm d}s + {\rm d}x^2\big),  
\end{equation} for some suitable smooth function $\mathfrak{a}\colon \R\to \R$. The coordinates $t$ and $s$ of $\widehat{M}$ are called $y$ and $t$ in \cite{ECS3D}, respectively, but have been renamed here as to make \eqref{eqn:model} directly resemble the corresponding local model given in \cite[Section 4]{local_structure} for $n\geq 4$.

The pursuit of compact three-dimensional ECS manifolds quickly comes to an end in view of the following result, interesting on its own right without reference to ECS geometry:

\begin{teoA}\label{teoA}
A compact three-dimensional pseudo-Riemannian manifold with parallel Cotton tensor must be conformally flat.
\end{teoA}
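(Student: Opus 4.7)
Suppose, for contradiction, that $(M,g)$ is a compact three-dimensional pseudo-Riemannian manifold whose Cotton tensor $C$ is parallel and not identically zero. Because $\nabla C = 0$, any zero of $C$ would propagate to all of $M$ by parallel transport, so $C$ is nowhere-vanishing and $(M,g)$ is ECS; the local classification recalled in the introduction then places every point of $M$ inside a chart in which $g$ coincides with the model metric $\hg$ of \eqref{eqn:model}, for some smooth function $\mathfrak{a}$. The plan is to exhibit a globally defined smooth function $x \colon M \to \R$ with nowhere-vanishing differential, which is impossible on a compact manifold since any extremum of $x$ would have to be a critical point.

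The main input is an analysis of the local isometries $\phi$ of $\hg$. Writing $\phi(t,s,x) = (T,S,X)$ and imposing $\phi^{*}\hg = \hg$, the essential uniqueness of the null parallel direction $\partial_s$ rigidifies the $s$-dependence of $\phi$; the coefficient of $dx^2$ then forces $X_x = \pm 1$ pointwise; and the cubic term $x^3$ in the $dt^2$ coefficient of $\hg$ rules out both the sign $X_x = -1$ (since $-x^3 \ne x^3$) and any translation $x \mapsto x + \xi(t)$ (which would introduce an unmatched $x^2$ term). Hence every local isometry of $\hg$ preserves the coordinate $x$; the analogous computation between open subsets of two models $\hg_{\mathfrak{a}_1}$ and $\hg_{\mathfrak{a}_2}$ with possibly different functions yields the same rigidity for transition maps. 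Consequently, the locally defined $x$-coordinates supplied by the classification atlas patch together into a single smooth function $x \colon M \to \R$, and since $dx$ is nowhere vanishing in the model it is nowhere vanishing on $M$, contradicting compactness.

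The main obstacle I expect lies in the isometry-rigidity step: verifying the uniqueness of the null parallel direction $\partial_s$ for $\hg$ and the rigid role of the term $x^3$ requires a direct but nontrivial computation with Christoffel symbols, and one must arrange the classification atlas of \cite{ECS3D} so that its transition maps really are local isometries of model metrics (rather than maps that could reparametrize $x$ through the ambiguity in the choice of $\mathfrak{a}$), so that the pointwise rigidity applies uniformly. Once that is settled, the patching and the compactness argument are routine. It is precisely the odd-degree cubic in $x$ — which admits no translation symmetry — that distinguishes this three-dimensional setting from the higher-dimensional ECS case where compact examples are known to exist.
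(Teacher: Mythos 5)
Your proposal is correct in outline, but it takes a genuinely different route from the paper, and the comparison is instructive. The paper never analyzes isometries of the model \eqref{eqn:model}: it passes to the universal cover, uses the pointwise algebraic structure of Cotton-like tensors (Theorem \ref{lem:olszak_space}) to write ${\rm C}=(\vec{u}\wedge\vec{v})\otimes\vec{u}$ with $\vec{u}$ null parallel, notes that ${\rm Ric}=-f\,\vec{u}\otimes\vec{u}$ and that the metric is scalar-flat, computes ${\rm C}=(\vec{u}\wedge\nabla f)\otimes\vec{u}$, and then obtains a critical point of the $\Gamma$-invariant function $f$ on the compact quotient, forcing ${\rm C}=0$. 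Since in the model ${\rm Ric}=-3x\,{\rm d}t\otimes{\rm d}t$, the paper's $f$ is essentially your coordinate $x$, but defined intrinsically through the Ricci tensor; this buys independence from the classification atlas and from any isometry-rigidity computation, at the modest cost of covering-space bookkeeping. Your route instead hinges on the rigidity lemma, which is indeed true and provable along the lines you sketch, but two steps need more care than your outline suggests: (i) preservation of the kernel $\mathcal{D}$ of ${\rm C}$ gives $\phi_*\partial_s=c\,\partial_s$ with $c$ constant (two parallel sections of a parallel line subbundle differ by a constant), and since the $\hg$-dual of $\partial_s$ is proportional to ${\rm d}t$ this forces $T=c^{-1}t+\mathrm{const}$ -- you need $T_x=0$, not merely the vanishing of $s$-derivatives, before the ${\rm d}x^2$ coefficient can yield $X_x=\pm1$; (ii) when matching the ${\rm d}t^2$ coefficients, $S_t$ could a priori depend on $x$, so one must first use the ${\rm d}t\,{\rm d}x$ coefficient to see that $S_x$ is a function of $t$ alone, hence $S_t$ is affine in $x$ and cannot absorb cubic or quadratic terms; only then do the $x^3$ and $x^2$ coefficients give $X_x=+1$, $T'=\pm1$, and the vanishing of the translation $\xi(t)$, so that $X=x$. (A quicker rigidity proof: pull back ${\rm Ric}$ and ${\rm C}$ of the two models, which on the dense set $x\neq 0$ immediately gives ${\rm d}T\propto{\rm d}t$ and then $X=x$.) Your worry about the atlas is already resolved by your own two-model formulation: transition maps are automatically isometries between open subsets of models with possibly different functions $\mathfrak{a}$, and the rigidity holds in that generality. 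Finally, recall that the classification applies only after possibly replacing $\g$ by $-\g$; this is harmless, since that change affects neither the Cotton tensor nor conformal flatness, but it should be said.
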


While the compactness assumption here is crucial, Theorem \ref{teoA} may be seen as a close relative (in general signature) of \cite[Theorem 1]{cotton-soliton}: compact Riemannian \emph{Cotton solitons} are conformally flat, but nontrivial compact Lorentzian ones do exist.

\medskip

\noindent {\bf Acknowledgments.} I would like to thank Andrzej Derdzinski for all the comments helping improve the presentation of the text.

\section{Preliminaries}\label{sec:Cotton_3}

Throughout this paper, we work in the smooth category and all manifolds considered are connected. 

\subsection{Symmetries of the Cotton tensor}

The \emph{Cotton tensor} of a $n$-dimensional pseudo-Riemannian manifold $(M,\g)$ is the three-times covariant tensor field ${\rm C}$ on $M$ defined by
\begin{equation}
  \label{eq:cotton}
  {\rm C}(X,Y,Z) = (\nabla_XP)(Y,Z) - (\nabla_YP)(X,Z),\quad\mbox{for } X,Y,Z\in\mathfrak{X}(M).
\end{equation}
Here, $P$ is the \emph{Schouten tensor} of $(M,\g)$, given by
\begin{equation}
  \label{eq:schouten}
  P = {\rm Ric} - \frac{{\rm s}}{2(n-1)}\g,
\end{equation}where ${\rm Ric}$ and ${\rm s}$ stand for the Ricci tensor and scalar curvature of $(M,\g)$, respectively. The Cotton tensor satisfies the following symmetries:
\begin{equation}\label{eqn:Cotton_symmetries}
  \parbox{.6\textwidth}{
    \begin{enumerate}[(i)]
    \item ${\rm C}(X,Y,Z)+{\rm C}(Y,X,Z) = 0$
    \item ${\rm C}(X,Y,Z) + {\rm C}(Y,Z,X) + {\rm C}(Z,X,Y) = 0$
    \item ${\rm tr}_{\g}\big((X,Z) \mapsto {\rm C}(X,Y,Z)\big) = 0$
    \end{enumerate}}
\end{equation}
for all $X,Y,Z\in\mathfrak{X}(M)$. Symmetry (i) is obvious, while (ii) follows from a straightforward computation (six terms cancel in pairs), and (iii) from ${\rm div}\,P = {\rm d}({\rm tr}_{\g}\,P)$ (which, in turn, is a consequence of the twice-contracted differential Bianchi identity ${\rm div}\,{\rm Ric} = {\rm ds}/2$).

\subsection{Algebraic structure in dimension $3$}

A routine computation shows that
\begin{equation}\label{eqn:Ricci_Cotton_model}
  \parbox{.57\textwidth}{the Ricci and Cotton tensors of \eqref{eqn:model} are given by ${\rm Ric} = -3x\,{\rm d}t\otimes {\rm d}t$ and ${\rm C} = 3\,({\rm d}t\wedge {\rm d}x)\otimes {\rm d}t$.}
\end{equation}
The expression for ${\rm C}$ motivates the following result, analogous to \cite[Lemma 17.1]{Tohoku07}:

\begin{teo}\label{lem:olszak_space}
  Let $(V,\langle\cdot,\cdot\rangle)$ be a three-dimensional pseudo-Euclidean space, and ${\rm C}$ be a nonzero \emph{Cotton-like tensor} on $V$, i.e., a three-times covariant tensor on $V$ which formally satisfies \eqref{eqn:Cotton_symmetries}, and consider $\mathcal{D} = \{u\in V \mid {\rm C}(u,\cdot,\cdot) = 0\}$. Then:
  \begin{enumerate}[\normalfont(a)]
  \item $\mathcal{D}$ consists only of null vectors, and hence $\dim \mathcal{D} \leq 1$.
  \item  $\dim \mathcal{D} = 1$ if and only if ${\rm C} = (u\wedge v)\otimes u$ for some $u\in \mathcal{D}\smallsetminus \{0\}$ and unit $v\in \mathcal{D}^\perp$.
  \item In \emph{(b)}, $u$ is unique up to a sign, while $v$ is unique modulo $\mathcal{D}$.
  \end{enumerate}
 Here, we identify $V\cong V^*$ with the aid of $\langle\cdot,\cdot\rangle$.
\end{teo}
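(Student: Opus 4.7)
The plan is to push each of the three symmetries (i)--(iii) to its extreme conclusion, in the spirit of \cite[Lemma 17.1]{Tohoku07}. The leverage is that, in three dimensions, the space of tensors antisymmetric in the first two slots has low rank, so the first Bianchi identity together with the trace condition cuts $\rm C$ down to a handful of components.

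For (a), I argue by contradiction: suppose $u \in \mathcal{D}$ has $\langle u, u\rangle \neq 0$, and complete $\hat u = u/\sqrt{|\langle u, u\rangle|}$ to a pseudo-orthonormal basis $\{\hat u, e_1, e_2\}$. Antisymmetry kills every component of ${\rm C}$ involving $\hat u$ in its first or second slot. The first Bianchi identity on $(e_1, e_2, \hat u)$ then kills ${\rm C}(e_1, e_2, \hat u)$ (the other two cyclic terms put $\hat u$ in slot 1 or 2), and the trace condition with $Y = e_1$ and $Y = e_2$ kills ${\rm C}(e_2, e_1, e_2)$ and ${\rm C}(e_1, e_2, e_1)$ respectively. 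With these, no nonzero components of ${\rm C}$ survive, contradicting ${\rm C} \neq 0$. So $\mathcal{D}$ is totally null, and in a three-dimensional pseudo-Euclidean space a totally null subspace has dimension at most one.

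For the ``if'' direction of (b), the trace of $(u \wedge v) \otimes u$ collapses to $\langle u, u\rangle v - \langle u, v\rangle u$, which vanishes by hypothesis, while a rank count gives $\mathcal{D} = \mathbb{R}u$. For ``only if'', write $\mathcal{D} = \mathbb{R}u$ with $u$ null (so the signature is indefinite), pick a null $w$ with $\langle u, w\rangle = 1$ and a unit $v \in \{u, w\}^\perp$. The same mechanism as in (a) kills every component of ${\rm C}$ involving $u$ in the first two slots, so only ${\rm C}(v, w, Z)$ with $Z \in \{u, v, w\}$ can be nonzero. Bianchi on $(v, w, u)$ kills ${\rm C}(v, w, u)$, and the trace condition with $Y = v$ (in this basis only $g^{vv}$ contributes off the null pair $\{u, w\}$) kills ${\rm C}(v, w, v)$. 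Thus ${\rm C}$ is controlled by the single scalar $\beta = {\rm C}(v, w, w) \neq 0$. Translating to the identification $V \cong V^*$ (in this basis $u^\flat = e^w$, $w^\flat = e^u$, $v^\flat = \langle v, v\rangle e^v$) gives ${\rm C} = -\langle v, v\rangle\,\beta\,(u \wedge v) \otimes u$, and the leftover scalar is absorbed by rescaling $u$ and choosing a sign for $v$, with $v$ remaining unit.

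For (c), any second decomposition ${\rm C} = (u' \wedge v') \otimes u'$ forces $u' = \lambda u$ (since $\mathcal{D} = \mathbb{R}u$), whence matching tensors yields $v - \lambda^2 v' \in \mathcal{D}$; passing to norms and using $\langle v, v\rangle, \langle v', v'\rangle \in \{\pm 1\}$ (the $\mathcal{D}$-summand is null and orthogonal to $v'$, so it drops out) gives $\lambda^4 = \pm 1$, hence $\lambda = \pm 1$. The main obstacle is the bookkeeping in the forward direction of (b): correctly passing between the dual-basis description of ${\rm C}$ and the vector form $(u \wedge v) \otimes u$ via the metric, and absorbing the scalar $\beta$ into the rescaling of $u$ and $v$ without breaking the unit normalization of $v$. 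Once this translation is in hand, the existence and uniqueness statements follow in sequence.
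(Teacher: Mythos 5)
Your proposal follows essentially the same route as the paper's proof: pass to a basis adapted to a null vector spanning $\mathcal{D}$, use membership in $\mathcal{D}$ together with antisymmetry to kill every component with $u$ in the first two slots, use the cyclic identity to kill the third-slot-$u$ component, use the trace condition to kill one more component, and absorb the single surviving scalar by rescaling $u$ and adjusting the sign of $v$. Parts (a) and (c), and the dual-basis bookkeeping in (b) (including the factor $-\langle v,v\rangle\beta$ and its absorption), all check out. The one step that fails as written is the trace argument in the ``only if'' direction of (b): with $Y=v$ the trace $\sum g^{ik}{\rm C}(e_i,v,e_k)={\rm C}(u,v,w)+{\rm C}(w,v,u)+\langle v,v\rangle\,{\rm C}(v,v,v)$ reduces to $-{\rm C}(v,w,u)=0$, which merely repeats what the Bianchi step already gave and says nothing about ${\rm C}(v,w,v)$. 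The component ${\rm C}(v,w,v)$ is instead killed by the trace with $Y=w$: there the null-pair terms ${\rm C}(u,w,w)$ and ${\rm C}(w,w,u)$ vanish (the first because $u\in\mathcal{D}$, the second by antisymmetry), so the identity reduces to $\langle v,v\rangle\,{\rm C}(v,w,v)=0$. This is exactly the corresponding step in the paper, where the trace taken with the second null basis vector in the distinguished slot yields ${\rm C}_{232}=0$. With that single index corrected, your argument is complete and matches the paper's.
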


\begin{proof}
  For (a), assuming by contradiction the existence of a unit vector \linebreak[4]$e_1\in \mathcal{D}$, we will show that ${\rm C} = 0$. Considering an orthonormal basis $\{e_1,e_2,e_3\}$ for $(V,\langle\cdot,\cdot\rangle)$ and using (\ref{eqn:Cotton_symmetries}-i) and (\ref{eqn:Cotton_symmetries}-ii), we see that
  \begin{equation}\label{eqn:non_essential_C}
    \parbox{.8\textwidth}{${\rm C}_{ijk}$ is only possibly nonzero when $\{i,j,k\} = \{2,3\}$ with $i\neq j$.}
  \end{equation} Now ${\rm C}_{322} = -{\rm C}_{232}$ and ${\rm C}_{323} = -{\rm C}_{233}$, while ${\rm tr}_{\langle\cdot,\cdot\rangle}\big((w,w')\mapsto {\rm C}(e_j,w,w')\big)=0$ for $j=2$ and $j=3$ readily yields ${\rm C}_{233} = 0$ and ${\rm C}_{322}=0$, respectively. Hence ${\rm C} = 0$, as claimed. As for (b), assume that $\dim \mathcal{D} = 1$, fix a null vector $e_1\in \mathcal{D}\smallsetminus \{0\}$, and complete it to a basis $\{e_1,e_2,e_3\}$ of $V$ satisfying the relations
  \begin{equation}\label{eqn:Penrose_frame}
  \langle e_1,e_2\rangle = \langle e_2,e_3\rangle = \langle e_3,e_3\rangle = 0\quad\mbox{and}\quad \langle e_1,e_3\rangle = \langle e_2,e_2\rangle = (-1)^{q+1},
  \end{equation}where $q \in \{1,2\}$ is the index of $\langle\cdot,\cdot\rangle$. By the same argument as in (a), we again obtain \eqref{eqn:non_essential_C}, but this time ${\rm tr}_{\langle\cdot,\cdot\rangle}\big((w,w')\mapsto {\rm C}(e_3,w,w')\big) = 0$ reduces to ${\rm C}_{232} = 0$ in view of \eqref{eqn:Penrose_frame}. Writing $a = {\rm C}_{323} \neq 0$ for the last essential component of ${\rm C}$, it follows that ${\rm C} = a (e^3 \wedge e^2)\otimes e^3$, where $\{e^1,e^2,e^3\}$ is the basis of $V^*$ dual to $\{e_1,e_2,e_3\}$. Applying the isomorphism $V\cong V^*$ and setting $u = |\hspace{.5pt}a|^{1/2}e_1$ and $v= {\rm sgn}(a) e_2$, we obtain the required expression ${\rm C} = (u\wedge v)\otimes u$. Conversely, it is straightforward to verify that the tensor $(u\wedge v)\otimes u$ with $u$ null and $v$ unit and orthogonal to $u$ is Cotton-like with $\mathcal{D}=\R u$ and $\mathcal{D}^\perp = \R u\oplus \R v$. Finally, (c) is clear from (b).
\end{proof}

As a consequence, whenever $(M,\g)$ is a three-dimensional pseudo-Riemannian manifold, we may assign to each point $x\in M$ the kernel $\mathcal{D}_x$ of ${\rm C}_x$ in $(T_xM,\g_x)$. In the ECS case, we have that
\begin{equation}\label{eqn:smooth_distr}
  \parbox{.69\textwidth}{$\mathcal{D}$ is a smooth rank-one parallel distribution on $M$, which contains the image of the Ricci endomorphism of $(M,\g)$.}
\end{equation}
Indeed, we may note that \eqref{eqn:smooth_distr} holds in the model \eqref{eqn:model} (as \eqref{eqn:Ricci_Cotton_model} gives us that $\mathcal{D}$ is spanned by the coordinate vector field $\partial_s$, $\hg$-dual to ${\rm d}t$ up to a factor of $2$), and invoke \cite[Theorem 1]{ECS3D}.

\section{Proof of Theorem A}\label{sec:proof_A}

In this section, we fix a compact three-dimensional ECS manifold $(M,\g)$ and its universal covering manifold $\pi\colon \widetilde{M} \to M$, which equipped with the natural pull-back metric $\tg = \pi^*\g$ becomes an ECS manifold. We will use the same symbols ${\rm Ric}$, $P$, ${\rm C}$, $\nabla$, and $\mathcal{D}$ for the corresponding objects in both $(M,\g)$ and $(\widetilde{M},\tg)$. Observe that
\begin{equation}\label{eqn:compact_quot}
  \parbox{.75\textwidth}{the fundamental group $\Gamma = \pi_1(M)$ acts properly discontinuously on $(\widetilde{M},\tg)$ by deck isometries, with quotient $\widetilde{M}/\Gamma \cong M$.}
\end{equation}
 As $\widetilde{M}$ is simply connected, we may fix two globally defined smooth vector fields $\vec{u}$ and $\vec{v}$ such that ${\rm C} = (\vec{u}\wedge \vec{v})\otimes \vec{u}$ on $\widetilde{M}$. Now, as $\mathcal{D}$ is parallel, item (c) of Theorem \ref{lem:olszak_space} gives us that 
 \begin{equation}\label{eqn:u_parallel}
   \parbox{.9\textwidth}{\begin{enumerate}[(i)] \item $\vec{u}$ is a null parallel vector field spanning $\mathcal{D}$; \item every $\gamma\in \Gamma$ either pushes $\vec{u}$ forward onto itself or onto its opposite.\end{enumerate}}
 \end{equation}
Next, as the Ricci endomorphism of $(\widetilde{M},\tg)$ is self-adjoint, \eqref{eqn:smooth_distr} allows us to write
\begin{equation}\label{eqn:function_f}
  \parbox{.67\textwidth}{${\rm Ric} = -f\,\vec{u}\otimes\vec{u}$, for some smooth function $f\colon \widetilde{M}\to \R$.}
\end{equation}
By \eqref{eqn:function_f} and (\ref{eqn:u_parallel}-i), $(\widetilde{M},\tg)$ is scalar-flat, and so $P={\rm Ric}$. Combining this with (\ref{eqn:u_parallel}-i) again to compute ${\rm C}$ via \eqref{eq:cotton}, we obtain that
\begin{equation}\label{eq:Cotton-gradient}
  \parbox{.65\textwidth}{${\rm C} = (\vec{u}\wedge \nabla f)\otimes\vec{u}$, where $\nabla f$ is the $\tg$-gradient of $f$.}
\end{equation}
However, it follows from (\ref{eqn:u_parallel}-ii) and \eqref{eqn:function_f} that $f$ is $\Gamma$-invariant, and so it has a critical point due to \eqref{eqn:compact_quot} and compactness of $M$. Such a critical point is in fact a zero of ${\rm C}$ by \eqref{eq:Cotton-gradient}, and therefore ${\rm C} = 0$. This is the desired contradiction: $(M,\g)$ must be either noncompact, or conformally flat.

\bibliography{ECS_3D_refs}{}

\begin{thebibliography}{1}

\bibitem{ECS3D}
E.~Calvi\~{n}o Louzao, E.~Garc\'{\i}a-R\'{\i}o, J.~Seoane-Bascoy, and
  R.~V\'{a}zquez-Lorenzo.
\newblock Three-dimensional conformally symmetric manifolds.
\newblock {\em Ann. Mat. Pura Appl. (4)}, 193(6):1661--1670, 2014.

\bibitem{cotton-soliton}
E.~Calvi\~{n}o Louzao, E.~Garc\'{\i}a-R\'{\i}o, and R.~V\'{a}zquez-Lorenzo.
\newblock A note on compact {C}otton solitons.
\newblock {\em Classical Quantum Gravity}, 29(20):205014, 5, 2012.

\bibitem{Chaki-Gupta}
M.~C. Chaki and B.~Gupta.
\newblock On conformally symmetric spaces.
\newblock {\em Indian J. Math.}, 5:113--122, 1963.

\bibitem{TensorNS77}
A.~Derdzi\'{n}ski and W.~Roter.
\newblock On conformally symmetric manifolds with metrics of indices {$0$} and
  {$1$}.
\newblock {\em Tensor (N.S.)}, 31(3):255--259, 1977.

\bibitem{Tohoku07}
A.~Derdzinski and W.~Roter.
\newblock Projectively flat surfaces, null parallel distributions, and
  conformally symmetric manifolds.
\newblock {\em Tohoku Math. J. (2)}, 59(4):565--602, 2007.

\bibitem{local_structure}
A.~Derdzinski and W.~Roter.
\newblock The local structure of conformally symmetric manifolds.
\newblock {\em Bull. Belg. Math. Soc. Simon Stevin}, 16(1):117--128, 2009.

\bibitem{AGAG10}
A.~Derdzinski and W.~Roter.
\newblock Compact pseudo-{R}iemannian manifolds with parallel {W}eyl tensor.
\newblock {\em Ann. Global Anal. Geom.}, 37(1):73--90, 2010.

\bibitem{DT_1}
A.~Derdzinski and I.~Terek.
\newblock New examples of compact {W}eyl-parallel manifolds.
\newblock \emph{Monatsh. Math.} (published online), DOI
  \url{https://doi.org/10.1007/s00605-023-01908-0}.

\bibitem{Roter74}
W.~Roter.
\newblock On conformally symmetric {R}icci-recurrent spaces.
\newblock {\em Colloq. Math.}, 31:87--96, 1974.

\end{thebibliography}
\bibliographystyle{plain}

\end{document}